\renewcommand{\ldots}{\ensuremath{\dotsc}}
\def\F{{\mathcal F}}
\newcommand{\Ra}{\mathcal{R}} 
\newcommand{\C}{\mathbb{C}} 
\def\Z{{\mathcal Z}}
\def\H{{\mathcal H}}
\newcommand{\X}{\mathcal{X}}
\newcommand{\Y}{\mathcal{Y}}
\newcommand{\XP}{{\mathcal{X}^\perp}}
\newcommand{\YP}{{\mathcal{Y}^\perp}}
\newcommand{\QX}{X}
\newcommand{\QXP}{X_\perp}
\newcommand{\QY}{Y}
\newcommand{\QYP}{Y_\perp}
\newcommand{\QZ}{Z}
\newcommand{\diag}{\mathop{\mathrm{diag}}} 
\newcommand{\rank}{\mathop{\mathrm{rank}}} 
\newcommand{\an}{\mathop{\mathrm{and}}}
\def\dim{{{\rm dim}}}
\def\max{{{\rm max}}}
\def\min{{{\rm min}}}
\newcommand{\moo}{\mathfrak{M}_{00}}
\newcommand{\moi}{\mathfrak{M}_{01}}
\newcommand{\mio}{\mathfrak{M}_{10}}
\newcommand{\mii}{\mathfrak{M}_{11}}
\newcommand{\mm}{\mathfrak{M}}
\newtheorem{thm}{Theorem}[section]
\newtheorem{defin}{Definition}[section]
\newtheorem{cor}{Corollary}[section]
\title{Angles between subspaces and their tangents
\footnote{%
A preprint is uploaded at \url{http://arxiv.org/abs/1209.0523}. 
}}
\abstract{
Principal angles between subspaces (PABS) (also called canonical angles) serve as a classical tool   
in mathematics, statistics, and applications, e.g.,\ data mining.
Traditionally, PABS are introduced via their cosines.
The cosines and sines of PABS are commonly 
defined using the singular value decomposition. 
We utilize the same idea for the tangents, i.e.,\ explicitly construct matrices, such that
their singular values are equal to the tangents of PABS, 
using several approaches:
orthonormal and non-orthonormal bases for subspaces, as well as projectors. 
Such a construction has applications, e.g.,\ 
in analysis of convergence of subspace iterations for eigenvalue problems.
}
\keywords{
principal angles, canonical angles, singular value, projector 
}
\begin{document}

\section{Introduction}\label{intro}
The concept of  principal angles between subspaces (PABS) 
is introduced by Jordan \cite{Jordan75} in 1875. 
Hotelling \cite{hotelling36} defines PABS in the form of
canonical correlations in statistics in 1936.  
Traditionally, PABS are introduced and used via their 
sines and more commonly, because of their connection to canonical correlations, cosines;
see, e.g.,\  
\cite{{Deutsch94},{Ipsen95},{knyazev06},{sunste},{wedin83}}.
The properties of sines and cosines of PABS are well investigated; 
e.g.,\ in \cite{{bjorck73},{ka02},{sun87}}. 

The tangents of PABS have attracted relatively less attention, compared to the cosines, 
despite of the celebrated work 
of Davis and Kahan \cite{kahan}, which includes several tangent-related theorems.  
The tangents of PABS also appear in several other important publications on numerical matrix analysis.  
In \cite{{bosner09},{drmac96}}, the authors use the tangent of the largest 
 principal angle derived from a norm of a specific matrix. 
In~\cite[Theorem 2.4, p. 252]{Stewart01} and \cite[p. 231-232]{sunste}
the tangents of PABS, related to singular values 
of a matrix---without an explicit matrix formulation---are used 
to analyze perturbations of invariant subspaces. 
The tangents of PABS are used in \cite{drma98} for generalized singular value computation. 
Properties of an oblique projector (idempotent) are naturally determined by 
the tangents of angles between its null space and range, e.g.,\
the spectral norm of an idempotent is the secant of the largest canonical angle between its row
and column spaces; see \cite{Stewart11} and references there. 

In~\cite{{bosner09},{drmac96},{Stewart01},{Stewart11},{sunste}}, 
the two subspaces have the same dimensions. 
In this work, for two given subspaces (not necessarily of the same dimensions)
we construct a family $\F$ of explicitly given matrices, 
such that the singular values of the matrix $T\in\F$ are the tangents of PABS. 
We find $T$ in two different ways. First, we derive $T$ using 
matrices whose columns form the (orthonormal) bases of the subspaces. Second, we present  
$T$ as the product of projectors on the subspaces. 
We~describe the action of  the matrix $T$ as a linear operator, and provide a geometric interpretation of
the singular values of $T$. Some of our proofs are reasonably technical, although rather standard 
and should be accessible to a wide audience.

Our basis-based constructions include the 
matrices used in~\cite{{bosner09},{drmac96},{Stewart01},{sunste}}
and extend to the case of subspaces with different dimensions.
These results are motivated by and have applications in 
our recent and upcoming work on majorization-based analysis of convergence of the Rayleigh-Ritz method \cite[Section 2.5]{ka09} and subspace iterations for eigenvalue problems~\cite{Zhutan}.
Let us, however, warn a non-expert reader that the tangent function is 
not well-suited for computations of angles close to $\pi/2$ for evident 
reasons. Thus, our formulas for $T$ cannot be recommended in general for numerical evaluation 
of PABS in their full range of values $[0,\pi/2].$

The basis-based approach bounds us to the matrix theory, as we represent subspaces by matrices, 
e.g.,\ it makes it difficult to attempt extending our results to general infinite-dimensional Hilbert spaces 
by analogy with \cite{{kja10}}. Our projector-based results are more intuitive and general, 
relying only on geometry of the space.  
However, we do not develop here an independent theory, 
readily applicable to  principle angles between infinite-dimensional subspaces as in \cite{{kja10}}. 
We still use the basis description of subspaces, but only in the proofs, using our previous matrix results. 
The statements are basis-free, e.g.,\ 
according to Theorem~\ref{cor:projectort} and Remark~\ref{rem:idempotent}, 
one possible form of the operator $T$ is a product of 
the orthogonal projector onto $\XP$ and the oblique projector that projects onto $\Y$ along $\XP$, 
where the singular values of $T$ determine the tangents of principal angles between subspaces $\X$ and $\Y$ and the subspace $\XP$ denotes the orthogonal complement to $\X$. 
If the subspaces $\X$ and $\Y$ are in generic position, i.e.,\ none of the PABS is zero or $\pi/2$,
the statement also becomes dimensionless, i.e., does not depend on $\dim(\X)$ or $\dim(\Y)$. 

The rest of the paper is organized as follows. We briefly review the concept and
some important properties of PABS, as well as related preliminaries, in Section~\ref{sec:2.1}.
The goal of this work is
explicitly constructing a family of matrices such that
their singular values are equal to the tangents of PABS.  
We form these matrices using bases for subspaces in Section~\ref{sec:tanorth} 
and projectors in Section~\ref{sec:tanproj}.
 
\section{Definition of PABS and other preliminaries}\label{sec:2.1}
In this section, we remind the reader the concept of PABS and 
some fundamental properties of PABS. 
We first recall that 
an acute angle between two unit vectors $x$ and $y$, 
i.e.,\ with $x^Hx = y^Hy = 1$, 
is defined as
\[\cos\theta(x, y) = |x^Hy|,\text{ where }0\leq\theta(x, y)\leq \pi/2.\]
This definition 
can be recursively extended to PABS; see, e.g., \cite{{bjorck73},{golub92},{hotelling36}}.
\begin{defin}
Let $\X\subset \C^{n}$ and $\Y\subset \C^{n}$ be subspaces with $\dim(\X)=p$ and $\dim(\Y)=q.$ 
Let $m=\min{(p,q)}.$
The principal angles 
$\Theta(\X,\Y) = \left[\theta_1,\ldots,\theta_{m}\right],$ 
where 
$\theta_k\in[0,\pi/2], \, k=1,\ldots,m,$ 
between $\X$ and $\Y$
are recursively defined by 
\[
\cos(\theta_{k})=\max_{x \in \X}\max_{y \in \Y}|x^Hy|=|x_k^H y_{k}|,
\]
subject to 
$
\|x\|=\|y\|=1,\,x^Hx_i=0,\,y^Hy_i=0,\,i=1,\ldots,k-1.
$
The vectors $\{x_1,\ldots, x_{m}\}$ and $\{y_1,\ldots, y_{m}\}$ are 
called the principal vectors. 
\end{defin}

An alternative definition of PABS, from 
\cite{{bjorck73},{golub92}}, is based on the singular value decomposition
(SVD) and reproduced here as the following theorem. 

\begin{thm}\label{thm:2.1def}
Let the columns of matrices $X \in \C^{n \times p}$ and 
$Y \in \C^{n \times q}$ 
form  orthonormal bases for the subspaces $\X$ and $\Y$, 
correspondingly. Let the SVD of $X^HY$ be 
$
U \Sigma V^H,
$
where $U$ and $V$ are unitary matrices and $\Sigma$ is a $p \times q$ 
diagonal matrix with the real diagonal elements $s_1, \ldots, s_m$ 
in decreasing order with $m=\min(p,q)$. 
Then
$
\cos\Theta^{\uparrow}(\X,\Y)=S\left(X^HY\right)=
\left[s_1, \ldots, s_m\right],
$
where $\Theta^{\uparrow}(\X,\Y)$ denotes the vector of principal angles between 
 $\X$ and $\Y$ arranged in increasing order and $S(A)$ denotes the vector of
singular values of $A$.
Moreover, the principal vectors associated with this pair of subspaces are 
given by the first $m$ columns of 
$XU$ and $YV,$ correspondingly.
\end{thm}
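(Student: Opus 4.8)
The plan is to reduce the recursive maximization from the Definition to a recursive maximization over coefficient vectors, and then to recognize the latter as the standard variational characterization of singular values and singular vectors.

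First I would use that, since the columns of $X$ are orthonormal, the map $a\mapsto Xa$ is an isometry from $\C^{p}$ onto $\X$; hence every unit vector $x\in\X$ equals $Xa$ for a unique unit vector $a\in\C^{p}$, and similarly every unit vector $y\in\Y$ equals $Yb$ with $\|b\|=1$. Writing $M=X^{H}Y$, one then has $x^{H}y=a^{H}Mb$, and — once the principal vectors have been identified inductively as $x_{i}=Xu_{i}$ and $y_{i}=Yv_{i}$ — the side constraints $x^{H}x_{i}=0$ and $y^{H}y_{i}=0$ become $a^{H}u_{i}=0$ and $b^{H}v_{i}=0$, because $X^{H}X=I_{p}$ and $Y^{H}Y=I_{q}$. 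This turns the whole recursion into a recursion over $a\in\C^{p}$ and $b\in\C^{q}$, and the maxima are attained by compactness of the unit spheres.

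Next I would handle the base case: $\cos\theta_{1}=\max_{\|a\|=\|b\|=1}|a^{H}Mb|$, which is the familiar identity $\|M\|_{2}=s_{1}$, with the maximum attained at $a=u_{1}$, $b=v_{1}$ (expand $a$ and $b$ in the singular-vector bases given by the columns of $U$ and $V$ and apply Cauchy--Schwarz). For the inductive step, with $x_{1},\dots,x_{k-1}$ and $y_{1},\dots,y_{k-1}$ already fixed as above, $\cos\theta_{k}$ is the maximum of $|a^{H}Mb|$ over unit vectors with $a\perp u_{1},\dots,u_{k-1}$ and $b\perp v_{1},\dots,v_{k-1}$; expanding again in singular-vector coordinates gives $a^{H}Mb=\sum_{i=k}^{m}s_{i}\bar{\alpha}_{i}\beta_{i}$, whence $|a^{H}Mb|\le s_{k}$, with equality at $a=u_{k}$, $b=v_{k}$. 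This proves $\cos\theta_{k}=s_{k}$ together with $x_{k}=Xu_{k}$, $y_{k}=Yv_{k}$ for $k=1,\dots,m$, i.e., the principal vectors are the first $m$ columns of $XU$ and $YV$; and since $s_{1}\ge\dots\ge s_{m}$ forces $\theta_{1}\le\dots\le\theta_{m}$, the list $[\theta_{1},\dots,\theta_{m}]$ is already $\Theta^{\uparrow}(\X,\Y)$, giving $\cos\Theta^{\uparrow}(\X,\Y)=[s_{1},\dots,s_{m}]=S(X^{H}Y)$.

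The only delicate point — hence the main obstacle — is the deflation argument: one must check carefully that cutting $a$ and $b$ down to the orthogonal complements of the leading $k-1$ left/right singular vectors genuinely removes exactly the first $k-1$ singular values and reduces the problem to the trailing ones $s_{k},\dots,s_{m}$, and that the modulus $|\cdot|$, a phase freedom absent over $\R$, causes no trouble (it does not, since any phase can be absorbed into $a$ or $b$). I would also note that when $M$ has repeated singular values the vectors $u_{k},v_{k}$ are not unique, so the principal vectors are only determined up to the associated unitary freedom, and the statement is to be read in that sense; any valid SVD of $X^{H}Y$ works.
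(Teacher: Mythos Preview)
Your argument is correct and is essentially the standard proof: pull the constrained maxima back to coefficient vectors via the isometries $a\mapsto Xa$ and $b\mapsto Yb$, and then invoke the variational (Courant--Fischer type) characterization of singular values of $M=X^{H}Y$. The deflation step is fine as written, and your remarks about phase freedom and non-uniqueness under repeated singular values are appropriate caveats.

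There is nothing to compare against, however: the paper does not prove this theorem. It is quoted as a known alternative definition of PABS, with attribution to Bj\"orck--Golub and Golub--Zha, and is used throughout the paper as a tool. So your proposal supplies a self-contained proof where the paper simply cites the literature; the approach you take is the classical one found in those references.
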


Theorem \ref{thm:2.1def} implies that PABS are symmetric, i.e.\ 
$\Theta(\X,\Y)=\Theta(\Y,\X)$, and
unitarily invariant, i.e.,\ $\Theta(U\X,U\Y)=\Theta(\X,\Y)$ for 
 any unitary transformation $U$. 
Important properties of PABS have been established,
 for finite dimensional subspaces, e.g.,\ in~\cite{Ipsen95,knyazev06,sunste,sun87,wedin83}, and 
for infinite dimensional subspaces in~\cite{{Deutsch94},{kja10}}.
Relationships of principal
angles between $\X$ and $\Y$, and between their orthogonal complements  $\XP$ and $\YP$, correspondingly, are investigated in \cite{{Ipsen95},{knyazev06},{kja10}} as follows.
\begin{property}\label{pro:angle2}
Let $\Theta^{\downarrow}(\X,\Y)$ denote PABS arranged in decreasing order. Then:\\
(1) $\left[\Theta^{\downarrow}(\X,\Y),0,\ldots,0\right]
=\left[\Theta^{\downarrow}(\XP,\YP),0,\ldots,0\right],$\\ 
with $\max(n-\dim(\X)-\dim(\Y),0)$ zeros on the left and $\max(\dim(\X)+\dim(\Y)-n, 0)$ zeros on the right.\\ 
(2) $\left[\Theta^{\downarrow}(\X,\YP),0,\ldots,0\right]
=\left[\Theta^{\downarrow}(\XP,\Y),0,\ldots,0\right],$\\
with $\max(\dim(\Y)-\dim(\X),0)$ zeros on the left and $\max(\dim(\X)-\dim(\Y), 0)$ zeros on the right.\\
(3) $\left[\frac{\pi}{2},\ldots, \frac{\pi}{2},\Theta^{\downarrow}(\X,\Y)\right]
=\left[\frac{\pi}{2}-\Theta^{\uparrow}(\X,\YP),0,\ldots,0\right],$\\ 
with $\max(\dim(\X)-\dim(\Y),0)$ $\pi/2s$ on the left and $\max(\dim(\X)+\dim(\Y)-n, 0)$ zeros on the right. 
\end{property}

PABS are closely related to the Cosine-Sine Decomposition (CSD); e.g.,\ \cite{{es},{ps81},{pw94}}. 
Let $[\QX,\QXP]$ and $[\QY,\QYP]$ be unitary matrices with $\QX\in \C^{n \times p}$ and $\QY\in \C^{n \times q}$. Applying CSD to $[\QX,\QXP]^H[\QY,\QYP]$, we obtain
\begin{equation*}
[\begin{array}{ll} \QX & \QXP \end{array}]^H[\begin{array}{ll} \QY & \QYP \end{array}] 
=\left[ \begin{array}{c|l} \QX^H\QY & \QX^H\QYP \\ \hline
                               \QXP^H\QY & \QXP^H\QYP
             \end{array} \right] =\left[ \begin{array}{ll} U_{1} & \\             
              &  U_{2}  \end{array}  \right]
  D           
     \left[ \begin{array}{ll} V_{1} & \\ & V_{2}\end{array} \right]^H
\end{equation*}
with unitary matrices $U_{1},$ $U_2$, $V_1$, and $V_2$. The matrix $D$ has
 the following structure:
\begin{equation}\label{eqn:cs1}
D=\kbordermatrix{ &r&s&q-r-s&&n-p-q+r&s&p-r-s \\
  r    &I  &   &     & \vrule &  O &    &    \\
  s    &   & C &     & \vrule &        & S  &    \\
  p-r-s&   &   & O & \vrule&        &    &  I  \\ \hline
n-p-q+r&O&   &     & \vrule&  -I    &    &     \\
  s    &   & S &     & \vrule&        &-C  &    \\
 q-r-s &   &   &   I & \vrule&        &    & O },
\end{equation}
  where $C=\diag\left(\cos\left(\theta_{j_1}\right),\ldots,\cos\left(\theta_{j_s}\right)\right)$, and 
 $S=\diag(\sin(\theta_{j_1}),\ldots,\sin(\theta_{j_s}))$ such that $\theta_{j_k}\in(0,\pi/2)$ for $k=1,\ldots,s$,  
are all the principal angles between the subspaces $\Ra(\QY)$ and $ \Ra(\QX)$ located in the open interval $(0,\pi/2)$. 
Zero matrices of various sizes, not necessarily square, are denoted by $O$. 
 $I$ denotes the identity matrix. We
 may have different sizes of $I$ in~$D.$ In addition, it is possible to permute the first $q$ columns or the last 
$n-q$ columns of $D$, or the first $p$ rows or the last $n-p$ rows and 
to change the sign of any column or row to obtain the variants of the CSD.
 
The block sizes in the matrix $D$ are determined by the following decomposition of 
the space $ \C^n=\moo \oplus \moi \oplus \mio \oplus \mii \oplus \mm$ 
into an orthogonal sum of five subspaces, as in \cite{{halmos69},{kja10}}, 
defined via the column ranges $\X=\Ra\left(\QX\right) $ and 
$\Y=\Ra\left(\QY\right)$, and their orthogonal complements, $\XP$ and $\YP$, correspondingly, 
in the following way: 
\[\moo= \X \cap \Y, \enspace \moi=\X \cap \YP, 
\enspace \mio=\XP \cap \Y, \enspace
\mii=\XP \cap \YP.\] 
Namely, 
$
\dim(\moo)=r,\,
\dim(\mio)=q-r-s,\,
\dim(\moi)=p-r-s,$
and
$
\dim(\mii)=n-p-q+r,
$
according to \cite[Tables 1 and 2]{kja10}.
Decomposing $\mm=\mm_{\X} \oplus \mm_{\XP}=\mm_{\Y} \oplus \mm_{\YP}$, 
where
\begin{eqnarray*}
\mm_{\X}=\X \cap \left(\moo \oplus \moi\right)^{\perp}, &{}&
\mm_{\XP}=\XP \cap \left(\mio \oplus \mii\right)^{\perp},\\
\mm_{\Y}=\Y \cap \left(\moo \oplus \mio\right)^{\perp}, &{}&
\mm_{\YP}=\YP \cap\left(\moi \oplus \mii\right)^{\perp},
\end{eqnarray*}
we get $s=\dim(\mm_{\X})=\dim(\mm_{\Y})=\dim(\mm_{\XP})=\dim(\mm_{\YP})=\dim(\mm)/2.$

Finally, we extensively use the Moore-Penrose pseudoinverse; see, e.g.\ \cite{sunste}.
The Moore-Penrose pseudoinverse  $A^{\dagger}\in \C^{m\times n}$ of a matrix $A\in \C^{n\times m}$ 
satisfies the following $AA^{\dagger}A=A,\,  A^{\dagger}AA^{\dagger}=A^{\dagger},\,
(AA^{\dagger})^H=AA^{\dagger},\, (A^{\dagger}A)^H=A^{\dagger}A,$ and 
\begin{itemize}
	\item if $A = U\Sigma V^H$ is the SVD of $A$, 
	      then $A^{\dagger} = V\Sigma^{\dagger} U^H$;
  \item if $A$ has full column rank and $B$ has full row rank, 
       then $(AB)^{\dagger}=B^{\dagger}A^{\dagger}$. However, this formula does not hold
        in general;
  \item  $AA^{\dagger}$ is the orthogonal projector onto the range of $A$, and  
         $A^{\dagger}A$ is the orthogonal projector onto the range of $A^H$;
\item if $U$ and $V$ are unitary matrices then 
$
(UAV)^{\dagger}=V^HA^{\dagger}U^H
$
 for any matrix $A$;
\item let $A$, $B$, and $C$ be block matrices, such that
\[
A=\left[\begin{array}{ll} A_1           & O_A \end{array}\right],\,
B=\left[\begin{array}{l} B_1   \\ 
         O_B      \end{array}\right],\,
C=\left[\begin{array}{ll} C_1            & O_{12}\\ 
         O_{21}        & O_{2}\end{array}\right],
\]
where  $O_{\star}$ are various zero matrices. Then
\[
   A^{\dagger}=\left[\begin{array}{l} A_1^{\dagger}\\ 
                                O_{A}^H      \end{array}\right],\,  
B^{\dagger}=\left[\begin{array}{ll} B_1^{\dagger}           & O_{B}^H\end{array}\right],\,   
C^{\dagger}=\left[\begin{array}{ll} C_1^{\dagger}           & O_{21}^H\\ 
                                O_{12}^H       & O_{2}^H\end{array}\right].
\]          
\end{itemize}


\section{\texorpdfstring{$\tan{\Theta}$}{tan{Theta}} in terms 
of the  bases of subspaces}\label{sec:tanorth}
Let the orthonormal columns of matrices $\QX$, $\QXP$, and $\QY$ span the 
subspaces $\X$, the orthogonal complement $\XP$ of $\X$, and $\Y$, correspondingly.  
Then $\cos\Theta(\X,\Y)=S(\QX^H\QY)$ and $\cos\Theta(\XP,\Y)=S(\QXP^H\QY)$ by Theorem \ref{thm:2.1def}. 
We begin with an example using 2D vectors. 
Let
\[
\QX=\left[\begin{array}{l} 1\\ 0\end{array}\right],
\quad \QXP=\left[\begin{array}{r} 0\\ 1 \end{array}\right],\quad\text{and} \quad
 \QY=\left[\begin{array}{l} \cos\theta \\
 \sin\theta \end{array}\right],
 \] 
where $0\leq \theta < \pi/2$. 
Then,  $ \QXP^H\QY=\sin\theta$  and $\QX^H\QY=\cos\theta$.
Obviously, $\tan\theta$ is the singular value of $T=\QXP^H\QY\left(\QX^H\QY\right)^{-1}$. 
If $\theta=\pi/2$, then the matrix $\QX^H\QY$ is singular in this example.
Moreover,   if $\dim\X\neq\dim\Y$ the matrix $\QX^H\QY$ is rectangular, 
so we use its Moore-Penrose pseudoinverse 
to form our matrix $T=\QXP^H\QY\left(\QX^H\QY\right)^\dagger$. 
Now we are ready to prove our first main result.  

\begin{thm} \label{the: generalT1}
Let $[\QX,\QXP]$ be
a unitary matrix with $\QX\in~\C^{n\times p}$.
Let $\QY\in~\C^{n\times q}$ 
\begin{enumerate}
	\item have orthonormal columns, or
	\item be such that $\rank\left(\QY\right)=\rank\left(\QX^H\QY\right)$, where $\rank\left(\QX^H\QY\right)\leq p.$
\end{enumerate}
Then the positive
singular values of the matrix 
$T=\QXP^H\QY\left(\QX^H\QY\right)^{\dagger}$ satisfy 
 \begin{equation} \label{eqn:tan1}
 \tan\Theta(\X,\Y)=[\infty,\ldots,\infty,S_+(T),0,\ldots,0],
\end{equation} 
with $\min\left(\dim(\XP \cap \Y),\dim(\X \cap \YP)\right)$ 
$\infty's$ and $\dim( \X \cap \Y)$ zeros, where we denote $\Ra(X)=\X$ and $\Ra(Y)=\Y$. 

In case (ii), $\min\left(\dim(\XP \cap \Y),\dim(\X \cap \YP)\right)=0.$ 
  \end{thm}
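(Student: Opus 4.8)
The plan is to use that both sides of~\eqref{eqn:tan1} are unitarily invariant, which lets me replace $\QX$, $\QXP$, and $\QY$ by the normal form provided by the cosine--sine decomposition~\eqref{eqn:cs1}, and then compute the resulting matrix $T$ block by block.

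First, I would reduce case~(ii) to case~(i). Given $\QY$ with $\rank(\QY)=\rank(\QX^H\QY)\le p$, write a thin factorization $\QY=\tilde\QY B$, where $\tilde\QY\in\C^{n\times d}$ has orthonormal columns spanning $\Y$ (so $d=\dim\Y$) and $B\in\C^{d\times q}$ has full row rank, e.g.\ taking $\tilde\QY$ from the thin SVD of $\QY$. The rank hypothesis is exactly the statement that $\QX^H$ is injective on $\Y$, i.e.\ $\XP\cap\Y=\{0\}$; this gives both $\min(\dim(\XP\cap\Y),\dim(\X\cap\YP))=0$ (the last assertion of the theorem) and the fact that $\QX^H\tilde\QY$ has full column rank. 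Hence the pseudoinverse product rule applies to $\QX^H\QY=(\QX^H\tilde\QY)B$, and combined with $BB^\dagger=I$ it collapses $T$ to $\QXP^H\tilde\QY(\QX^H\tilde\QY)^\dagger$, which is the matrix of case~(i) for the orthonormal columns $\tilde\QY$ spanning the same subspace $\Y$. So from here on I may assume $\QY$ has orthonormal columns.

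Next, completing $\QY$ to a unitary $[\QY,\QYP]$, the CSD gives $\QX^H\QY=U_1D_{11}V_1^H$ and $\QXP^H\QY=U_2D_{21}V_1^H$, where $D_{11}$ and $D_{21}$ are, respectively, the top-left $p\times q$ and bottom-left $(n-p)\times q$ blocks of the matrix $D$ in~\eqref{eqn:cs1}, and $U_1,U_2,V_1$ are unitary. Using $(UAV)^\dagger=V^HA^\dagger U^H$, I get $T=U_2\,D_{21}D_{11}^\dagger\,U_1^H$, so $S(T)=S(D_{21}D_{11}^\dagger)$ and everything reduces to computing this product explicitly. Here $D_{11}$ is rectangular with nonzero part $\diag(I_r,C)$, $C=\diag(\cos\theta_{j_1},\dots,\cos\theta_{j_s})$, so by the block-pseudoinverse formulas of Section~\ref{sec:2.1} the matrix $D_{11}^\dagger$ has nonzero part $\diag(I_r,C^{-1})$; multiplying by $D_{21}$, whose nonzero part is $\diag(O,S,I_{q-r-s})$ with $S=\diag(\sin\theta_{j_1},\dots,\sin\theta_{j_s})$, leaves only the diagonal block $SC^{-1}=\diag(\tan\theta_{j_1},\dots,\tan\theta_{j_s})$. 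Thus $S_+(T)=[\tan\theta_{j_1},\dots,\tan\theta_{j_s}]$, the tangents of precisely the principal angles lying in $(0,\pi/2)$.

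Finally, I would account for the boundary angles. By Theorem~\ref{thm:2.1def}, the cosines of the $m=\min(p,q)$ principal angles are the singular values of $D_{11}$, namely $r$ ones, the $s$ numbers $\cos\theta_{j_k}\in(0,1)$, and $\min(p,q)-r-s$ zeros; so among the principal angles there are $r=\dim(\X\cap\Y)$ equal to $0$ (tangent $0$) and $\min(p,q)-r-s$ equal to $\pi/2$ (tangent $\infty$), which yields the form $[\infty,\dots,\infty,S_+(T),0,\dots,0]$ with $r$ trailing zeros. Since the CSD block sizes give $\dim(\X\cap\YP)=p-r-s$ and $\dim(\XP\cap\Y)=q-r-s$, the number of infinities is $\min(p,q)-r-s=\min(p-r-s,q-r-s)=\min(\dim(\X\cap\YP),\dim(\XP\cap\Y))$, exactly as in~\eqref{eqn:tan1}. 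The main obstacle is the explicit evaluation of $D_{11}^\dagger$ and of the product $D_{21}D_{11}^\dagger$: since $D_{11}$ is rectangular with non-square blocks, one has to track the row/column partitions carefully (in particular the interchange of the blocks of sizes $p-r-s$ and $q-r-s$ under the pseudoinverse) when applying the block identities of Section~\ref{sec:2.1}; checking the hypotheses of the product rule in the first step also needs a little care.
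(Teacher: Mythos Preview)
Your proof is correct and follows essentially the same route as the paper: case~(i) is handled via the CSD \eqref{eqn:cs1}, yielding $T=U_2\,\diag(O,SC^{-1},O)\,U_1^H$ and hence $S_+(T)=[\tan\theta_{j_1},\dots,\tan\theta_{j_s}]$, while case~(ii) is reduced to case~(i) by factoring $\QY$ through an orthonormal basis of $\Y$ and applying the product rule $(AB)^\dagger=B^\dagger A^\dagger$ (the paper uses the thin SVD $\QY=U_t\Sigma_tV_t^H$, which is a particular instance of your factorization $\QY=\tilde\QY B$). Your ordering (treating~(ii) first) and the added detail on computing $D_{21}D_{11}^\dagger$ are cosmetic differences only.
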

\begin{proof}
(1) On the one hand, from equality \eqref{eqn:cs1}, we obtain  
\begin{equation*}
T=\QXP^H\QY\left(\QX^H\QY\right)^{\dagger}= U_2\left[\begin{array}{lll} 
                     O  & &\\
                       & SC^{-1} &\\
                       & & O  
     \end{array} \right]U_1^H,
\end{equation*}
Hence, $S_+(T)=\left[\tan(\theta_{j_1}),\ldots,\tan(\theta_{j_s})\right]$, 
where $0<\theta_{j_1}\leq \cdots \leq\theta_{j_s}<\pi/2$.
On the other hand, from~\eqref{eqn:cs1} we get $S\left(X^HY\right)=S\left(\diag(I,C,O)\right).$
where the identity block is $r$-by-$r$ and the  zero block is $(p-r-s)$-by-$(q-r-s)$.
By Theorem \ref{thm:2.1def}, 
 $\Theta(\Ra(\QX),\Ra(\QY))=[0,\ldots,0,\theta_{j_1},\ldots,\theta_{j_s},\pi/2,\ldots,\pi/2]$, 
 where 
there are  $r=\dim\X\cap\Y$ zeros and $\min(q-r-s,p-r-s)=\min\left(\dim\XP\cap\Y,\dim\X\cap\YP\right)$  values $\pi/2$.  

(2) Let us denote the rank of $\QY$ be $t$, thus $ t\leq p$. 
Let the  SVD of $\QY$ 
be $U\Sigma V^H$, where $U$ is an $n \times n$ unitary matrix and $V$ is a $q \times q $ unitary matrix; 
$\Sigma$ is an $n \times q $ real diagonal matrix with diagonal 
entries ordered by decreasing magnitude.
Since rank($\QY$)=$t\leq q$, we can get a reduced SVD 
such that $\QY=U_t\Sigma_t V_t^H$. Only the $t$ column vectors 
of $U$ and the $t$ row vectors of $V^H$, corresponding to nonzero 
singular values are used, which means that $\Sigma_t$ is a 
$t$-by-$t$ invertible diagonal matrix. Based on the fact that 
the left singular vectors corresponding to the non-zero singular values of $Y$ span the range of $Y$, 
$
\tan\Theta(\Ra(\QX),\Ra(\QY))=
 \tan\Theta(\Ra(\QX),\Ra(U_t)).
 $ 
 Since $\rank(X^HY)=t$, we have $\rank(\QX^HU_t\Sigma_t)=\rank(\QX^HU_t)=t.$ Let $T_1=\QXP^HU_t\left(\QX^HU_t\right)^{\dagger}$.
We have $
 \tan\Theta(\Ra(\QX),\Ra(U_t))=[S_+(T_1),0,\ldots,0].
$
It is worth noting that the angles between $\Ra(\QX)$ and $\Ra(U_t)$ are in $[0,\pi/2)$, since
$\QX^HU_t$ is full rank.

Our task is now to show that $T_1=T$. By direct computation, we have 
\begin{eqnarray*}
 T&=&\QXP^H\QY\left(\QX^H\QY\right)^{\dagger} \\
&=&\QXP^HU_t\Sigma_tV_t^H\left(\QX^HU_t\Sigma_tV_t^H\right)^{\dagger}\\
&=&\QXP^HU_t\Sigma_tV_t^H\left(V_t^H\right)^{\dagger}\left(\QX^HU_t\Sigma_t\right)^{\dagger}\\
&=&\QXP^HU_t\Sigma_t\left(\QX^HU_t\Sigma_t\right)^{\dagger} \\
&=&\QXP^HU_t\Sigma_t\Sigma_t^{\dagger}\left(\QX^HU_t\right)^{\dagger}\\
&=&\QXP^HU_t\left(\QX^HU_t\right)^{\dagger}.
\end{eqnarray*}
In two identities above we use the fact that if a matrix $A$ is of full column rank, 
and a matrix $B$ is of full row rank, then $(AB)^{\dagger}=B^{\dagger}A^{\dagger}.$ 
Hence, $\tan\Theta(\Ra(\QX),\Ra(\QY))=[S_+(T),0,\ldots,0]$ which
completes the proof for the second case.
\end{proof}
 
 \begin{remark}  If $\QX^H\QY$ has full rank, where $Y$ may have non-orthonormal columns, 
we can alternatively prove 
 Theorem \ref{the: generalT1} by constructing 
 $\QZ=\QX+\QXP T$ 
as in \emph{\cite[Theorem 2.4, p.252]{Stewart01}} and \emph{\cite[p.231-232]{sunste}}. 
 Since $\left(\QX^H\QY\right)^{\dagger}\QX^H\QY=I$, the following identities
$
\QY=P_\X\QY+P_\XP\QY=\QX\QX^H\QY+\QXP\QXP^H\QY=\QZ\QX^H\QY
$
imply $\Ra(\QY)\subseteq \Ra(\QZ)$. 
By direct calculation, we obtain that 
\[
\QX^H\QZ=\QX^H(\QX+\QXP T)=I
\an
\QZ^H\QZ=(\QX+\QXP T)^H(\QX+\QXP T)=I+T^HT. 
\]
 Thus,
$\QX^H\QZ\left(\QZ^H\QZ\right)^{-1/2}=(I+T^HT)^{-1/2}$
is Hermitian positive definite.
The~matrix $\QZ(\QZ^H\QZ)^{-1/2}$ by construction 
has orthonormal columns which span the space $\Z$. Moreover, we observe that
\[
S\left(\QX^H\QZ(\QZ^H\QZ)^{-1/2}\right)
              =\left(1+S^2\left(T\right)\right)^{-1/2}.
\]
Therefore,
$
\tan\Theta(\Ra(\QX),\Ra(\QZ))=[S_+(T),0,\ldots,0]
$
and
$\dim(\X)=\dim(\Z).$ By Theorem \ref{the: generalT1},
$
\tan\Theta_{(0,\pi/2)}(\Ra(\QX),\Ra(\QY))=
\tan\Theta_{(0,\pi/2)}(\Ra(\QX),\Ra(\QZ)),
$
where  all PABS in $(0,\pi/2)$ are denoted by $\Theta_{(0,\pi/2)}$.
In other words, the angles  in $(0,\pi/2)$ between subspaces $\Ra(\QX)$ and $\Ra(\QY)$ are the same as those between subspaces $\Ra(\QX)$ and $\Ra(\QZ)$. 

If $p=q,$ we have that $\Theta(\Ra(\QX),\Ra(\QY))=\Theta(\Ra(\QX),\Ra(\QZ))$. 
We note that this approach also gives us the explicit expression 
$P=\QXP^H\QY\left(\QX^H\QY\right)^{-1}$ for the matrix $P$ with $S(P)=\tan\Theta(\Ra(\QX),\Ra(\QY))$; 
cf. \emph{\cite[Theorem 2.4, p.252]{Stewart01}} and \emph{\cite[p.231-232]{sunste}}. 
\end{remark}

\begin{remark}
For the case $Y^HY\neq I,$ the condition $\rank\left(\QY\right)=
\rank\left(\QX^H\QY\right)$ is necessary in Theorem \ref{the: generalT1}. For example,
let 
\[
\QX= \left[\begin{array}{l} 1\\ 0\\0\end{array}\right], \quad
\QXP=\left[\begin{array}{ll} 0& 0\\ 1&0\\ 0&1\end{array}\right],\quad\an\quad
\QY=\left[\begin{array}{ll} 1&1\\ 0&1\\ 0&0\end{array}\right].
\]
Then, we have 
$\QX^H\QY=[\begin{array}{ll}1&1\end{array}]$ and 
$(\QX^H\QY)^{\dagger}=[\begin{array}{ll}1/2&1/2\end{array}]^H$. 
Thus,
$$ T=\QXP^H\QY\left(\QX^H\QY\right)^{\dagger}=[\begin{array}{ll}1/2& 0\end{array}]^H,\quad\an\quad s(T)=1/2.$$
On the other hand, we obtain that 
 $\tan\Theta(\Ra(\QX),\Ra(\QY))=0.$ From this example, we see that
 the result in Theorem \ref{the: generalT1} may fail in the case $\rank(\QY)>\rank\left(\QX^H\QY\right).$
 \end{remark} 

Due to the fact that PABS
are symmetric, i.e.,\ $\Theta(\X,\Y)=\Theta(\Y,\X)$, the matrix $T$ in 
Theorem \ref{the: generalT1} could be substituted with
$\QYP^HX(Y^HX)^{\dagger},$ where $[\QY,\QYP]$ is unitary. 
Moreover, the nonzero angles between the subspaces $\X$ and $\Y$ are
the same as those between the subspaces $\XP$ and $\YP$. Hence, $T$ can be presented
as $X^H\QYP(\QXP^H\QYP)^{\dagger}$ and $Y^H\QXP(\QYP^H\QXP)^{\dagger}$. Furthermore, 
for any matrix $T$, we have $S(T)=S(T^H)$, which implies that all conjugate transposes of $T$ are admissible. 

Let $\F$ denote a family of matrices, such that the singular values of the matrix
$T\in\F$ are the tangents of PABS.
The arguments above show that any of the formulas for $T$ 
in the first column of Table \ref{table:base1} can be used in Theorem \ref{the: generalT1}, case (i).

\begin{table}[ht]
	\caption{Different $T\in\F$ using orthonormal bases for $\X$, $\Y$, $\XP$, and $\YP$,
 see Theorem \ref{the: generalT1}, case (i);
	 the top two rows also applicable to non-orthonormal basis for $\Y$ if $q\leq p$, 
see Theorem \ref{the: generalT1}, case (ii).}
	\centering
		\begin{tabular}{|l|l|}
		\hline
		  $\QXP^H\QY\left(\QX^H\QY\right)^{\dagger}$& $P_{\XP}\QY\left(\QX^H\QY\right)^{\dagger}$\\ \hline
		  $(Y^HX)^{\dagger}Y^H\QXP$         &    $(Y^HX)^{\dagger}Y^HP_{\XP}$          \\ \hline \hline
			$\QYP^HX(Y^HX)^{\dagger}$ &  $P_{\YP} X(Y^HX)^{\dagger}$ \\ \hline			
			$X^H\QYP(\QXP^H\QYP)^{\dagger}$   &    $P_{\X}\QYP(\QXP^H\QYP)^{\dagger}$          \\ \hline
			$Y^H\QXP(\QYP^H\QXP)^{\dagger}$   &    $P_{\Y}\QXP(\QYP^H\QXP)^{\dagger}$   \\ \hline
			
			$(X^HY)^{\dagger}X^H\QYP$         &    $(X^HY)^{\dagger}X^HP_{\YP}$           \\ \hline
			$(\QYP^H\QXP)^{\dagger}\QYP^H\QX$ &    $(\QYP^H\QXP)^{\dagger}\QYP^HP_{\X}$  \\ \hline
		$(\QXP^H\QYP)^{\dagger}\QXP^H\QY$   &    $(\QXP^H\QYP)^{\dagger}\QXP^HP_{\Y}$  \\ \hline 
		\end{tabular}
		\label{table:base1}
	\end{table}
Using the fact that the singular values are invariant under unitary multiplications, 
we can also use $P_{\XP}Y(X^HY)^{\dagger}$ for $T$ in Theorem \ref{the: generalT1},
where $P_{\XP}$ is an orthogonal projector onto the subspace $\XP$. 
Thus, every matrix $T\in\F$ in the first column has its analog in $\F$ as in the second column in Table \ref{table:base1}, 
where, again, 
$P_{\X}$, $P_{\Y}$, and $P_{\YP}$ denote the orthogonal projectors onto 
the subspace $\X$, $\Y$, and $\YP$, correspondingly. 

Finally, if $Y^HY\neq I$ and $\rank\left(\QY\right)=\rank\left(\QX^H\QY\right)\leq p$, 
Theorem \ref{the: generalT1}, case (ii) holds. Using the above arguments, we see that 
any matrix $T$ in the top two rows in Table \ref{table:base1} belongs to the family $\F$ under these assumptions. 

\begin{remark}\label{rem: formular1}
From the proof of Theorem \ref{the: generalT1} case (i), we immediately derive  
$
\QXP^H\QY\left(\QX^H\QY\right)^{\dagger}=-(\QYP^H\QXP)^{\dagger}\QYP^H\QX,
$ 
and
$\QYP^HX(Y^HX)^{\dagger}=-(\QXP^H\QYP)^{\dagger}\QXP^H\QY,$
which demonstrates that some entries in Table \ref{table:base1} differ from each other only by a sign.
\end{remark}

Now we show that some of the matrices $T\in\F$ in Table \ref{table:base1} result in 
singular values $S(T)$ that also match the multiplicity of zeros in Theorem \ref{the: generalT1} case (ii).
\begin{cor}\label{cor:invertiblexy} 
Using the notation of Theorem \ref{the: generalT1}, let $Y$ be full rank and  $p=q$.
Let $P_{\XP}$ be an orthogonal projection onto the subspace $\Ra(\QXP)$. Then we have
$
\tan\Theta(\Ra(\QX),\Ra(\QY))=
S\left(P_{\XP}\QY\left(\QX^H\QY\right)^{-1}\right).
$
\end{cor}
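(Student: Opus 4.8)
The plan is to reduce Corollary~\ref{cor:invertiblexy} to Theorem~\ref{the: generalT1}, case (ii), which already gives $\tan\Theta(\Ra(\QX),\Ra(\QY))=[S_+(T),0,\ldots,0]$ for $T=\QXP^H\QY(\QX^H\QY)^{\dagger}$, and then upgrade the statement from $S_+$ to $S$ and from $\QXP^H\QY$ to $P_{\XP}\QY$. First I would check that the hypotheses of Corollary~\ref{cor:invertiblexy} place us in case (ii): since $Y$ has full rank and $p=q$, we have $\rank(\QY)=q=p$, and $\QX^H\QY$ is a $p\times q=p\times p$ square matrix; one must argue $\rank(\QX^H\QY)=\rank(\QY)=p$, i.e.\ that $\QX^H\QY$ is invertible, so that $(\QX^H\QY)^{\dagger}=(\QX^H\QY)^{-1}$ and the $\min(\dim(\XP\cap\Y),\dim(\X\cap\YP))$ term is zero as asserted in the theorem. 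This rank fact is exactly the content of the second-case hypothesis $\rank(\QY)=\rank(\QX^H\QY)$; under $p=q$ it forces invertibility of $\QX^H\QY$, which in turn means no principal angle equals $\pi/2$, so there are no $\infty$ entries on the left of \eqref{eqn:tan1}.

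Next I would handle the passage from $S_+(T)$ to the full singular value vector $S(T)$. Since $p=q$, the matrix $T=\QXP^H\QY(\QX^H\QY)^{-1}$ has $p$ columns, and $\QX^H\QY$ being invertible means $T$ is a genuine $(n-p)\times p$ matrix whose columns are those of $\QXP^H\QZ$ with $\QZ=\QY(\QX^H\QY)^{-1}$ the basis with $\QX^H\QZ=I$ from the first Remark; the zero singular values of $T$ correspond precisely to the $\dim(\X\cap\Y)$ zero principal angles, so $S(T)$ (with its zeros included) already equals $\tan\Theta(\Ra(\QX),\Ra(\QY))$ with the correct multiplicity of zeros. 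Concretely, this follows by tracking the block structure in the CSD~\eqref{eqn:cs1}: with $p=q$ and $\QX^H\QY$ invertible we have $r+s=p=q$, $q-r-s=p-r-s=0$, so $T=U_2\,\diag(O,SC^{-1})\,U_1^H$ is $p\times p$ (after accounting for the $U_2$ being $(n-p)\times(n-p)$, the nonzero block sits in a $p\times p$ corner), and its singular values are $[\tan\theta_{j_1},\ldots,\tan\theta_{j_s},0,\ldots,0]$ with exactly $r=\dim(\X\cap\Y)$ zeros.

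Finally I would replace $\QXP^H\QY$ by $P_{\XP}\QY$. The key identity is $P_{\XP}=\QXP\QXP^H$, so $P_{\XP}\QY(\QX^H\QY)^{-1}=\QXP\bigl(\QXP^H\QY(\QX^H\QY)^{-1}\bigr)=\QXP T$; since $\QXP$ has orthonormal columns, left-multiplication by $\QXP$ is an isometry on columns and hence preserves all singular values, $S(\QXP T)=S(T)$. Combining the three steps gives $\tan\Theta(\Ra(\QX),\Ra(\QY))=S(T)=S(P_{\XP}\QY(\QX^H\QY)^{-1})$, which is the claim.

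\textbf{Main obstacle.} The only genuinely delicate point is the bookkeeping of zero multiplicities: Theorem~\ref{the: generalT1} as stated carefully separates $S_+(T)$ from the appended zeros, and one must verify that under $p=q$ with $Y$ full rank the full vector $S(T)$ — zeros included — has exactly $\dim(\X\cap\Y)$ zeros and no spurious ones, i.e.\ that $T$ is an honest $p\times p$ (or $(n-p)\times p$) matrix with no ``extra'' columns beyond those forced by the dimension count. This is where I would be most careful to invoke $\rank(\QX^H\QY)=p$ and the CSD block sizes, rather than waving at ``$S_+(T)$ plus zeros.''
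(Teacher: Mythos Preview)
Your approach is essentially the paper's: reduce to Theorem~\ref{the: generalT1}, case~(ii), observe that $p=q$ together with $\rank(\QY)=\rank(\QX^H\QY)$ forces $\QX^H\QY$ to be invertible (hence no $\infty$'s), and then match the count of singular values to the count of PABS. The paper's proof is two sentences doing exactly this, relying on the fact (already established just before Table~\ref{table:base1}) that $P_{\XP}\QY(\QX^H\QY)^\dagger\in\F$.

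There is one genuine slip in your execution, precisely at the point you flagged as the ``main obstacle.'' You argue first that $S(T)=\tan\Theta$ for $T=\QXP^H\QY(\QX^H\QY)^{-1}$, and then that $S(\QXP T)=S(T)$. Both steps fail when $n-p<p$: the matrix $T$ is $(n-p)\times p$, so $S(T)$ has only $\min(n-p,p)=n-p$ entries, whereas $\tan\Theta$ has $p$; and left-multiplication by $\QXP$ (an $n\times(n-p)$ isometry) preserves the \emph{nonzero} singular values but pads with extra zeros, so $S(\QXP T)\neq S(T)$ as vectors. A concrete instance is $n=3$, $p=q=2$, $\X=\Y=\span\{e_1,e_2\}$: then $T$ is $1\times 2$ with $S(T)=[0]$, yet $\tan\Theta=[0,0]$. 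The paper sidesteps this entirely by working directly with $P_{\XP}\QY(\QX^H\QY)^{-1}$, which is $n\times p$ and therefore has exactly $p=\min(p,q)$ singular values---the same count as PABS---so the positive singular values match by Theorem~\ref{the: generalT1} and the zeros are forced by cardinality. Your CSD bookkeeping is correct for the nonzero block; just apply the dimension count to the $n\times p$ matrix rather than routing through $T$.
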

\begin{proof}
Theorem \ref{the: generalT1} case (ii) involves no $\infty$'s and, 
since  $Y$ is full rank and  $p=q$, the matrix $\QX^H\QY$ is invertible.  
Moreover, the number of the singular values of 
$P_{\XP}\QY\left(\QX^H\QY\right)^{-1}$ is $p=q$, which is the same as the number of PABS in this case.
\end{proof}

The tangents of PABS also describe properties of blocks of  
the triangular matrix from the QR~factorization of a basis of the subspace $\X+\Y$. 
For brevity of presentation, we make simplifying assumptions on $\QX$ and $\QY$, 
avoiding the rank-revealing~QR. 
\begin{cor}\label{cor:generalT1}
Let  $X \in \C^{n \times p}$ and $Y \in \C^{n \times q}$ with $q\leq p$ be matrices of full rank,
 and also let $X^HY$ be full rank.
  Let the QR factorization of $[\QX\, \QY]$ be
\begin{equation*}
[\begin{array}{ll} \QX &\QY\end{array}]=
[\begin{array}{ll}Q &Q_\perp\end{array}]\left[\begin{array}{lc} 
                       R_{11} &R_{12}\\
                       O   & R_{22}
                       \end{array}\right],
\end{equation*}    
where  $Q \in \C^{n \times p} \text{ and } Q_\perp \in \C^{n \times (n-p+1)}.$
Then 
\[\tan\Theta(\Ra(\QX),\Ra(\QY))=[S_+(R_{22}(R_{12})^{\dagger}),0,\ldots,0]\]
with $\dim(\Ra(\QX)\cap \Ra(\QY))$ zeros.
  \end{cor}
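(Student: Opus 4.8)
The plan is to reduce Corollary~\ref{cor:generalT1} to Theorem~\ref{the: generalT1}, case (ii), by identifying the QR data with an orthonormal-basis description of $\X$, $\Y$, and $\XP$. First I would observe that since $[\QX\,\QY]$ has $p+q$ columns but $\Ra([\QX\,\QY])=\X+\Y$ has dimension $p+q-\dim(\X\cap\Y)$, the ``thin'' factor $Q_\perp$ is $n\times(n-p+1)$ only under the stated simplifying assumption that $q\le p$ and all relevant matrices are full rank so that $\dim(\X\cap\Y)$ is small; I would spell out that under these hypotheses $\Ra(Q)=\X$ (because the first $p$ columns of $[\QX\,\QY]$ are just $\QX$, which is full rank, so the leading principal block $R_{11}$ is $p\times p$ invertible and $Q$ is an orthonormal basis for $\X$). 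Consequently $R_{11}=Q^H\QX$, $R_{12}=Q^H\QY$, and $R_{22}=Q_\perp^H\QY$, while the columns of $Q_\perp$ are orthonormal and orthogonal to $\X$, i.e.\ $\Ra(Q_\perp)\subseteq\XP$.

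The key step is then to apply Theorem~\ref{the: generalT1}, case (ii), with the orthonormal basis $Q$ for $\X$ in the role of the earlier ``$X$'' and with $\QY$ (possibly non-orthonormal, but with $\rank(\QY)=\rank(Q^H\QY)=q$ by the full-rank assumptions on $Y$ and $X^HY$) in the role of ``$Y$''. That theorem gives $\tan\Theta(\X,\Ra(\QY))=[S_+(\widetilde T),0,\ldots,0]$ with $\dim(\X\cap\Y)$ zeros and no $\infty$'s, where $\widetilde T=P_{\XP}\,\QY\,(Q^H\QY)^{\dagger}=P_{\XP}\,\QY\,R_{12}^{\dagger}$, using the second-column form from Table~\ref{table:base1}. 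It remains to replace the abstract projector $P_{\XP}$ by the concrete matrix $Q_\perp Q_\perp^H$: since $\Ra(Q_\perp)\subseteq\XP$ and the columns of $Q_\perp$ are orthonormal, $Q_\perp Q_\perp^H$ is the orthogonal projector onto $\Ra(Q_\perp)$, and because $\Ra(\QY)\subseteq\X+\Y=\Ra(Q)\oplus\Ra(Q_\perp)$ we get $P_{\XP}\QY=Q_\perp Q_\perp^H\QY$ (the part of $\QY$ orthogonal to $\X$ already lies in $\Ra(Q_\perp)$). Hence $\widetilde T=Q_\perp Q_\perp^H\QY R_{12}^{\dagger}=Q_\perp(R_{22}R_{12}^{\dagger})$, and since left multiplication by the matrix $Q_\perp$ with orthonormal columns preserves singular values, $S_+(\widetilde T)=S_+(R_{22}R_{12}^{\dagger})$, which finishes the argument because $\tan\Theta(\Ra(\QX),\Ra(\QY))=\tan\Theta(\X,\Ra(\QY))$.

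I would also double-check the degenerate bookkeeping: the hypothesis $\rank(X^HY)=q$ forces $\dim(\X\cap\YP)=0$ and hence no $\infty$'s appear, consistent with the ``In case (ii)'' clause of Theorem~\ref{the: generalT1}; and the $\dim(\X\cap\Y)$ trailing zeros are exactly those inherited from that theorem. The only mild subtlety — and the main obstacle — is justifying the precise block-size claim $Q_\perp\in\C^{n\times(n-p+1)}$ and that $R_{12}$ has full row rank so that its pseudoinverse behaves well; this is where the assumption $q\le p$ together with $X^HY$ full rank is essential, since it guarantees $\dim(\X\cap\Y)=q-\rank(X^HY)$-type identities collapse appropriately. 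Apart from that, every step is either a direct consequence of the QR structure or a restatement of Theorem~\ref{the: generalT1} and the singular-value invariance under multiplication by matrices with orthonormal columns, so no genuinely hard computation is needed.
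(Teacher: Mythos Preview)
Your proposal is correct and follows essentially the same route as the paper: identify $R_{12}=Q^H\QY$, $R_{22}=Q_\perp^H\QY$, verify $\rank(Q^H\QY)=\rank(\QX^H\QY)=\rank(\QY)$ via the invertibility of $R_{11}$, and then invoke Theorem~\ref{the: generalT1}(ii). The only cosmetic difference is that the paper applies the theorem directly with $[Q\,Q_\perp]$ in place of the unitary pair $[\QX\,\QXP]$, obtaining $T=Q_\perp^H\QY(Q^H\QY)^{\dagger}=R_{22}R_{12}^{\dagger}$ in one stroke, whereas you pass through the projector form $P_{\XP}\QY(Q^H\QY)^{\dagger}$ from Table~\ref{table:base1} and then argue $P_{\XP}\QY=Q_\perp Q_\perp^H\QY$ from $\QY=QR_{12}+Q_\perp R_{22}$; this extra step is harmless and in fact makes the argument work even if $Q_\perp$ does not span all of $\XP$, which neatly sidesteps the awkward stated dimension $n-p+1$.
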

  \begin{proof}
Clearly, $\QX=QR_{11}$ and $\QY=QR_{12}+Q_\perp R_{22}.$
Since $\QX$ has full rank, we have $\Ra(Q)=\X$ and $R_{11}$ is invertible. 
Multiplying by $Q^H$ on both sides of equality for $\QY$, we get
$R_{12}=Q^H\QY.$ Moreover, $\rank\left(Q^H\QY\right)=\rank\left(\QX^H\QY\right)=\rank\left(\QY\right)$, since
$R_{11}$ is invertible and $X^HY=R_{11}^HQ^HY$. 
Multiplying the equality $\QY=QR_{12}+Q_\perp R_{22}$  by $Q_\perp^H$ gives $R_{22}=Q_\perp^H\QY$ 
and $R_{22}\left(R_{12}\right)^{\dagger}=Q_\perp^H\QY\left(Q^H\QY\right)^{\dagger}$. 
Theorem \ref{the: generalT1} case~(ii) holds with 
$\left[Q\,Q_\perp\right]$ substituting for $\left[\QX\,\QXP\right]$, 
since  we have $\rank\left(Q^H\QY\right)=\rank\left(\QY\right)=p$. 
 \end{proof}  

\section{\texorpdfstring{$\tan{\Theta}$}{tan{Theta}} in 
terms of projections onto subspaces}\label{sec:tanproj}
In the previous section, we rely on bases of subspaces to construct $T\in\F$. 
Now, we pursue a more basic geometric approach,  
representing subspaces by using orthogonal and oblique projectors, rather than matrices of their bases. 

\begin{thm}\label{cor:projectort}
Let $P_{\X}$, $P_{\XP}$ and $P_{\Y}$ be 
orthogonal projectors onto the subspaces $\X$, $\XP$ and $\Y$, correspondingly. 
Then the positive singular values 
$S_+(T)$ of the matrix $T=P_{\XP}\left(P_{\X}P_{\Y}\right)^{\dagger}$ satisfy 
$
\tan\Theta(\X,\Y)=[\infty,\ldots,\infty,S_+(T), 0, \ldots, 0],
$
where there are 
 $\min\left(\dim(\XP \cap \Y),\dim(\X \cap \YP)\right)$ 
$\infty's$ and $\dim( \X \cap \Y)$ zeros.
\end{thm}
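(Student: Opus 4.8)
The plan is to reduce the statement to Theorem~\ref{the: generalT1}, case (i), by expressing the projectors in terms of orthonormal bases and matching the resulting matrix with $T=\QXP^H\QY(\QX^H\QY)^\dagger$ up to unitary factors, since singular values and the multiplicities of $0$ and $\infty$ are all that matter. First I would fix unitary matrices $[\QX,\QXP]$ and $[\QY,\QYP]$ whose relevant blocks have orthonormal columns spanning $\X$, $\XP$, $\Y$, so that $P_{\X}=\QX\QX^H$, $P_{\XP}=\QXP\QXP^H$, and $P_{\Y}=\QY\QY^H$.

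Next I would compute $P_{\X}P_{\Y}=\QX(\QX^H\QY)\QY^H$. The key observation is that this is a product in which $\QX$ has full column rank and $\QY^H$ has full row rank; writing $A=\QX(\QX^H\QY)$ and $B=\QY^H$, one checks $A$ has full column rank (since $\rank A = \rank(\QX^H\QY)$ need not be full, this requires a little care — one may instead split off the rank factor, or invoke the block-pseudoinverse rules from Section~\ref{sec:2.1} applied in the CSD coordinates). Cleanest is to pass directly to the CSD: substituting \eqref{eqn:cs1} gives $P_{\X}P_{\Y}$, $P_{\XP}$, and hence $T=P_{\XP}(P_{\X}P_{\Y})^\dagger$ as explicit block-diagonal matrices conjugated by the unitary CSD factors. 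Using the block-pseudoinverse formulas listed at the end of Section~\ref{sec:2.1}, the pseudoinverse of the block-diagonal middle matrix is again block-diagonal with the $C$ block inverted on the $s\times s$ part and zeros elsewhere, so $T$ reduces, after unitary conjugation, to $\diag(O, SC^{-1}, O)$ on the appropriate blocks — exactly the matrix appearing in the proof of Theorem~\ref{the: generalT1}, case~(i).

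From there the conclusion is immediate: $S_+(T)=[\tan\theta_{j_1},\dots,\tan\theta_{j_s}]$ with $0<\theta_{j_1}\le\cdots\le\theta_{j_s}<\pi/2$, while the $r=\dim(\X\cap\Y)$ zero singular values and the $\min(q-r-s,p-r-s)=\min(\dim(\XP\cap\Y),\dim(\X\cap\YP))$ angles equal to $\pi/2$ (contributing the $\infty$'s) are read off from the CSD block sizes exactly as in that earlier proof. Alternatively, to avoid redoing the CSD bookkeeping, I would argue directly that $P_{\XP}(P_{\X}P_{\Y})^\dagger = \QXP\bigl(\QXP^H\QY(\QX^H\QY)^\dagger\bigr)\cdot(\text{something unitary-irrelevant})$ by verifying $(P_{\X}P_{\Y})^\dagger = \QY(\QX^H\QY)^\dagger\QX^H$ using the Moore--Penrose axioms (the four defining identities), then left-multiplying by $P_{\XP}=\QXP\QXP^H$ to get $\QXP\cdot\QXP^H\QY(\QX^H\QY)^\dagger\QX^H$, whose nonzero singular values coincide with those of $\QXP^H\QY(\QX^H\QY)^\dagger$ since $\QXP$ has orthonormal columns and $\QX^H$ has orthonormal rows.

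The main obstacle is the identity $(P_{\X}P_{\Y})^\dagger = \QY(\QX^H\QY)^\dagger\QX^H$: the naive reversal rule $(AB)^\dagger=B^\dagger A^\dagger$ fails for general products, so this must be checked either by direct verification of the four Moore--Penrose conditions or, more safely, through the CSD, where the rank structure of $\QX^H\QY$ is transparent and the block-pseudoinverse rules from Section~\ref{sec:2.1} apply verbatim. I expect the CSD route to be the one the authors take, as it parallels the proof of Theorem~\ref{the: generalT1} and simultaneously yields the exact count of zeros and infinities.
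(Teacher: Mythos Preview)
Your proposal is correct, and your \emph{alternative} route---establishing $(P_{\X}P_{\Y})^{\dagger}=\QY(\QX^H\QY)^{\dagger}\QX^H$ and then left-multiplying by $P_{\XP}$---is exactly what the paper does, though not quite the way you anticipate. The paper does \emph{not} go through the CSD again, nor does it check the four Moore--Penrose axioms by hand. Instead it writes
\[
P_{\X}P_{\Y}=[\QX\ \QXP]\begin{bmatrix}\QX^H\QY & O\\ O & O\end{bmatrix}\begin{bmatrix}\QY^H\\ \QYP^H\end{bmatrix},
\]
and then applies two rules from the list at the end of Section~\ref{sec:2.1}: unitary invariance $(UAV)^{\dagger}=V^HA^{\dagger}U^H$ and the block-diagonal pseudoinverse rule. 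This gives $(P_{\X}P_{\Y})^{\dagger}=\QY(\QX^H\QY)^{\dagger}\QX^H$ in one stroke, after which $T=P_{\XP}(P_{\X}P_{\Y})^{\dagger}$ has the same positive singular values as $P_{\XP}\QY(\QX^H\QY)^{\dagger}$, an entry of Table~\ref{table:base1}. The counts of zeros and $\infty$'s are then inherited from Theorem~\ref{the: generalT1} without repeating the CSD bookkeeping.

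So your expectation about which of your two routes the authors take is reversed: they pick the basis-representation shortcut, not the full CSD computation. Your CSD route would also work and is self-contained, but it duplicates the effort already invested in Theorem~\ref{the: generalT1}; the paper's version is shorter precisely because it reduces to that theorem via Table~\ref{table:base1}.
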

\begin{proof}
Let the matrices $[\begin{array}{ll} \QX & \QXP \end{array}]$ 
and $[\begin{array}{ll} \QY & \QYP \end{array}]$ be unitary, where
$\Ra(\QX)=\X$ and $\Ra(\QY)=\Y.$ 
By direct calculation, we obtain
\begin{equation}
\label{eqn:productprojectors}                                                                                                                            
P_\X P_\Y=[\begin{array}{ll} 
\QX & \QXP \end{array}]
\left[\begin{array}{ll} \QX^H\QY & O \\
                          O  & O
      \end{array}\right]
      \left[\begin{array}{l} 
      \QY^H \\ \QYP^H
      \end{array}\right].
\end{equation}                                                                                                                   
Using properties of the Moore-Penrose pseudoinverse, reviewed in Section \ref{sec:2.1},
we have
\begin{eqnarray}
\label{eqn:inverseprojectors} 
(P_\X P_\Y)^{\dagger}&=&[\begin{array}{ll} 
\QY & \QYP \end{array}]
\left[\begin{array}{ll} (\QX^H\QY)^{\dagger} &O \\
                            O& O
      \end{array}\right]
      \left[\begin{array}{l} 
      \QX^H \\ \QXP^H
      \end{array}\right],
\end{eqnarray}
therefore
\[
T=P_{\XP}\left(P_{\X}P_{\Y}\right)^{\dagger}=[P_{\XP}Y(\QX^H\QY)^{\dagger}\quad O]\left[\begin{array}{l} 
      \QX^H \\ \QXP^H
      \end{array}\right].
\]
Thus, $S_+(T)=S_+(P_{\XP}Y(\QX^H\QY)^{\dagger})$, but the latter matrix is in Table \ref{table:base1}.
\end{proof}

\begin{remark}\label{rem:idempotent}
We note that the null space of the product $P_\X P_\Y$ is the orthogonal sum $\YP\oplus\left(\Y\cap\XP\right)$.
Thus, its orthogonal complement,  $\Y\cap\left(\Y\cap\XP\right)^{\perp},$ is thus the range of $(P_\X P_\Y)^{\dagger}$, which implies $P_{\Y}(P_\X P_\Y)^{\dagger}=(P_\X P_\Y)^{\dagger}=(P_\X P_\Y)^{\dagger}P_{\X}$ and so
$\left((P_\X P_\Y)^{\dagger}\right)^2=(P_\X P_\Y)^{\dagger}P_\X P_\Y(P_\X P_\Y)^{\dagger}=(P_\X P_\Y)^{\dagger}$. 
Therefore, we conclude that $(P_\X P_\Y)^{\dagger}$ is simply an idempotent that projects on $\Y$ along $\XP$.
\end{remark}

\begin{figure}[!htb]
\begin{center}
	\includegraphics[width=0.45\columnwidth]{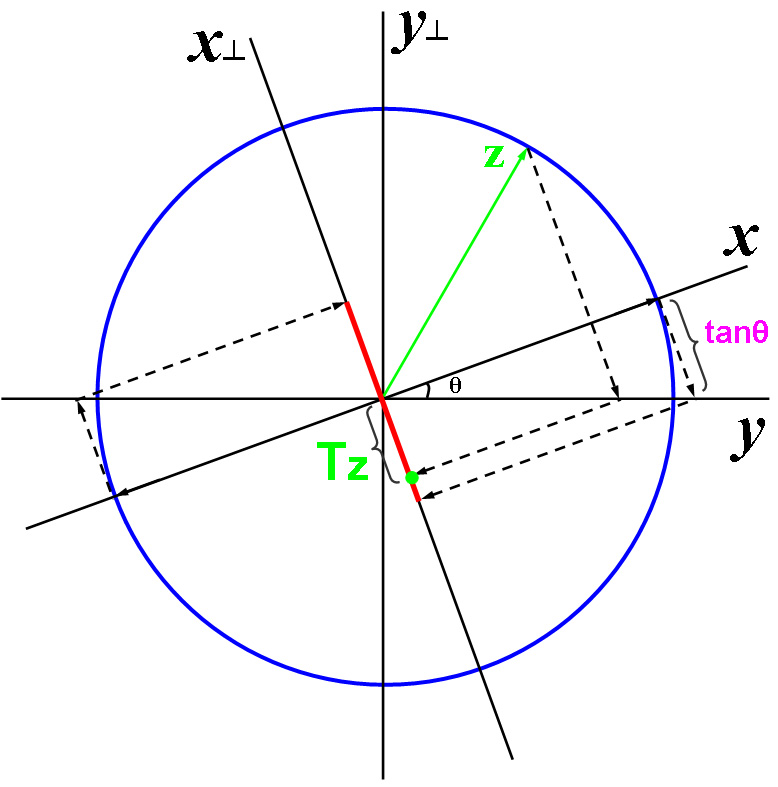}
\end{center}
\caption{Geometrical meaning of $T=P_{\XP}\left(P_{\X}P_{\Y}\right)^{\dagger}$.}\label{figureT}
\end{figure}

To gain geometrical insight into the action of $T$ from Theorem \ref{cor:projectort} as a linear transformation, 
in Figure \ref{figureT} we choose an arbitrary unit vector $z$. 
We project $z$ onto $\Y$ along $\XP$, then project onto the subspace $\XP$ 
which is interpreted as $Tz$. The red segment in the graph is 
the image of $T$ under all unit vectors. It is straightforward to see 
that $s(T)=\|T\|=\tan(\theta)$.


Using Property \ref{pro:angle2} and the fact that the principal angles are symmetric with
respect to the subspaces $\X$ and $\Y$, we observe that 
$P_{\XP}\left(P_{\X}P_{\Y}\right)^{\dagger}$ in Theorem \ref{cor:projectort}
can alternatively be substituted with 
$P_\YP(P_\Y P_\X)^{\dagger}$ or  $P_\X(P_\XP P_\YP)^{\dagger}$.
These expressions can be written in several forms, e.g.,\ Remarks \ref{rem:idempotent} implies 
\[
P_{\XP}\left(P_{\X}P_{\Y}\right)^{\dagger}=P_{\XP}P_{\Y}\left(P_{\X}P_{\Y}\right)^{\dagger}
=\left(P_\Y-P_\X\right)\left(P_\X P_\Y\right)^{\dagger}.
\]
From the proof of Theorem \ref{cor:projectort} we know that 
\[
P_{\XP}\left(P_{\X}P_{\Y}\right)^{\dagger}=[P_{\XP}Y(\QX^H\QY)^{\dagger}\quad 0]\left[\begin{array}{l} 
      \QX^H \\ \QXP^H
      \end{array}\right]=P_{\XP}Y(\QX^H\QY)^{\dagger}\QX^H.
 \]
      Similarly, we have $P_{\X}\left(P_{\XP}P_{\YP}\right)^{\dagger}=P_{\X}\QYP(\QXP^H\QYP)^{\dagger}\QXP^H.$
Then Remark \ref{rem: formular1} implies 
$P_{\XP}\left(P_{\X}P_{\Y}\right)^{\dagger}=-\left(P_{\X}\left(P_{\XP}P_{\YP}\right)^{\dagger}\right)^H.$
Using also Remark \ref{rem:idempotent} , we derive 
\begin{eqnarray*}
-\left(P_{\XP}\left(P_{\X}P_{\Y}\right)^{\dagger}\right)^H&=&P_\X(P_\XP P_\YP)^{\dagger}\\
&=&P_\X P_\YP(P_\XP P_\YP)^{\dagger}\\
&=&(P_\YP-P_\XP)(P_\XP P_\YP)^{\dagger}.
\end{eqnarray*}
Similar arguments justify the following chain of identities 
\begin{eqnarray*}
-\left(P_\Y(P_\YP P_\XP)^{\dagger}\right)^H&=&P_\YP (P_\Y P_\X)^{\dagger}\\
&=&P_\YP P_\X(P_\Y P_\X)^{\dagger}\\
&=&(P_\X-P_\Y)( P_\Y P_\X)^{\dagger}.
\end{eqnarray*}
This gives a variety of different possible choices of $T$ in Theorem~\ref{cor:projectort}.

Some of the formulas above can be simplified using the following lemma.
\begin{lemma}\label{thm:2.16}
Let $\Theta(\X,\Y)< \pi/2$. Then,   
\begin{enumerate}
\item if $\dim(\X)\leq\dim(\Y)$, we have
$P_{\X}(P_{\X}P_{\Y})^{\dagger}=P_{\X}$;
\item if $\dim(\X)\geq\dim(\Y)$, we have
$(P_{\X}P_{\Y})^{\dagger}P_{\Y}=P_{\Y}.$
\end{enumerate}
\end{lemma}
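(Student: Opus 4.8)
The plan is to reduce everything to the CSD block structure in \eqref{eqn:cs1}, just as in the proof of Theorem~\ref{the: generalT1} case~(i), since the hypothesis $\Theta(\X,\Y)<\pi/2$ means exactly that none of the principal angles equals $\pi/2$, i.e.\ in the notation of \eqref{eqn:cs1} we have $p-r-s=0$ (no $\pi/2$-angles), so $\X\cap\YP=\{0\}$. The two statements are mirror images of each other under $\X\leftrightarrow\Y$, so it suffices to prove (1) and then invoke the symmetry of PABS together with the identity $(AB)^\dagger$-type manipulations, or simply repeat the argument; I would prove (1) in detail and state that (2) follows by exchanging the roles of $\X$ and $\Y$ (equivalently, by taking conjugate transposes, since $(P_\X P_\Y)^H=P_\Y P_\X$ and $(M^\dagger)^H=(M^H)^\dagger$).

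For (1): first I would fix unitary matrices $[\QX\;\QXP]$ and $[\QY\;\QYP]$ with $\Ra(\QX)=\X$, $\Ra(\QY)=\Y$, and write $P_\X=\QX\QX^H$, $P_\Y=\QY\QY^H$, $P_\X P_\Y = \QX(\QX^H\QY)\QY^H$. Using $\QX^H$ full column rank and $\QY$ full row rank (applied to the factored middle matrix) together with the pseudoinverse product rule quoted in Section~\ref{sec:2.1}, one gets $(P_\X P_\Y)^\dagger = \QY(\QX^H\QY)^\dagger\QX^H$, which is also visible directly from \eqref{eqn:inverseprojectors}. Hence
\[
P_\X(P_\X P_\Y)^\dagger = \QX\,\QX^H\QY\,(\QX^H\QY)^\dagger\,\QX^H .
\]
So the claim $P_\X(P_\X P_\Y)^\dagger=P_\X=\QX\QX^H$ is equivalent to $\QX^H\QY\,(\QX^H\QY)^\dagger=I_p$, i.e.\ to $\QX^H\QY$ having full row rank $p$. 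This is where the hypothesis $\dim(\X)=p\le q=\dim(\Y)$ and $\Theta(\X,\Y)<\pi/2$ enter: by Theorem~\ref{thm:2.1def} the singular values of $\QX^H\QY$ are the cosines of the $m=\min(p,q)=p$ principal angles, all of which are strictly positive because every angle is $<\pi/2$; since $\QX^H\QY$ is $p\times q$ with $p\le q$ and all $p$ singular values nonzero, it has full row rank $p$, so $\QX^H\QY(\QX^H\QY)^\dagger=I_p$. This closes case~(1).

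The only mildly delicate point is the justification of $(P_\X P_\Y)^\dagger=\QY(\QX^H\QY)^\dagger\QX^H$; rather than invoking the product rule for pseudoinverses (whose hypotheses need checking for the three-factor product), the cleanest route is to simply cite \eqref{eqn:inverseprojectors} from the proof of Theorem~\ref{cor:projectort}, which already establishes this in the needed generality. With that in hand the argument is a one-line rank count, so I do not anticipate a real obstacle; the main thing to get right is bookkeeping of which factor is full column rank versus full row rank, and being explicit that $\Theta(\X,\Y)<\pi/2$ plus $p\le q$ is precisely what forces $\QX^H\QY$ to be right-invertible.
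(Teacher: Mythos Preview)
Your argument is correct. You reduce the identity to showing that $X^HY\,(X^HY)^\dagger=I_p$, i.e.\ that the $p\times q$ matrix $X^HY$ has full row rank, and this follows cleanly from Theorem~\ref{thm:2.1def} together with $p\le q$ and all angles strictly below $\pi/2$. Your derivation of part~(2) from part~(1) by conjugate transposition and the swap $\X\leftrightarrow\Y$ is also fine.

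The paper's own proof takes a different, more basis-free route: it invokes Remark~\ref{rem:idempotent} to rewrite $P_{\X}(P_{\X}P_{\Y})^{\dagger}=P_{\X}P_{\Y}(P_{\X}P_{\Y})^{\dagger}$, recognizes the latter as the orthogonal projector onto $\Ra(P_{\X}P_{\Y})$, and then argues geometrically that $\Ra(P_{\X}P_{\Y})=\X$ because $P_{\X}P_{\Y}$ restricted to $\X$ is invertible under the hypotheses. Your approach is more computational, leaning on the explicit formula \eqref{eqn:inverseprojectors} and a rank count for $X^HY$; the paper's approach stays at the level of projectors and ranges. Both hinge on the same underlying fact (that no principal angle equals $\pi/2$ and $p\le q$ force full rank), so neither is deeper than the other; the paper's version has the advantage of being coordinate-free and tying directly into the idempotent picture developed just above, while yours is arguably more transparent for a reader who prefers explicit matrices. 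Your opening remark about reducing to the CSD block structure is not actually used in your argument and could be dropped.
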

\begin{proof}
(1).  According to Remark \ref{rem:idempotent}, we get 
$P_{\X}(P_{\X}P_{\Y})^{\dagger}=P_{\X}P_{\Y}(P_{\X}P_{\Y})^{\dagger}$.
By the properties of the Moore-Penrose pseudoinverse, reviewed in Section \ref{sec:2.1}, 
$P_{\X}P_{\Y}(P_{\X}P_{\Y})^{\dagger}$ is the orthogonal projector onto the range 
 $\Ra\left(P_{\X}P_{\Y}\right)$ of $P_{\X}P_{\Y}$. The transformation $P_{\X}P_{\Y}$, if restricted to $\X$, is invertible, 
since $\Theta(\X,\Y)< \pi/2$ and $\dim(\X)\leq\dim(\Y)$; thus 
$\Ra\left(P_{\X}P_{\Y}\right)=\Ra\left(P_{\X}\right)$, which completes the proof of the first part.

Similarly, in part (2), $(P_{\X}P_{\Y})^{\dagger}P_{\Y}=(P_{\X}P_{\Y})^{\dagger}P_{\X}P_{\Y}$ 
is the orthogonal projector onto the range 
 $\Ra\left(\left(P_{\X}P_{\Y}\right)^H\right)=\Ra\left(P_{\Y}P_{\X}\right)=\Ra\left(P_{\Y}\right)$ 
since $\Theta(\X,\Y)< \pi/2$ and $\dim(\Y)\leq\dim(\X)$.
\end{proof}

\begin{remark}
From Lemma \ref{thm:2.16}, for the case $\dim(\X)\leq\dim(\Y)$ we have 
\[
P_\XP(P_\X P_\Y)^{\dagger}=(P_\X P_\Y)^{\dagger}-P_\X(P_\X P_\Y)^{\dagger}
                            =(P_\X P_\Y)^{\dagger}-P_\X.
\]
  Since the angles are symmetric, using the second statement in Lemma \ref{thm:2.16} we have 
   $
   (P_{\XP}P_{\YP})^{\dagger}P_{\Y}=(P_{\XP}P_{\YP})^{\dagger}-P_{\YP}.
$   
    
  On the other hand, for the case $\dim(\X)\geq\dim(\Y)$ we obtain 
\[
(P_\X P_\Y)^{\dagger}P_\YP=(P_\X P_\Y)^{\dagger}-P_\Y\mspace{3mu}\an\mspace{3mu} (P_{\XP}P_{\YP})^{\dagger}P_{\X}=(P_{\XP}P_{\YP})^{\dagger}-P_{\XP}.
 \]
 To sum up, the following formulas for $T$ in Table \ref{table:orth12} can also
 be used in Theorem~\ref{cor:projectort}. An alternative proof for $T=(P_\X P_\Y)^{\dagger}-P_\Y$  is
 provided by Drma{\v{c}} in~\emph{\cite{drmac96}} for the particular case $\dim(\X)=\dim(\Y)$.
   
 \begin{table}[!htb]
\caption{Choices for $T\in\F$ with $\Theta(\X,\Y)< \pi/2$: left for $\dim(\X)\leq\dim(\Y)$; right for $\dim(\X)\geq\dim(\Y).$}
	\centering
		\begin{tabular}{|l|l|}
		\hline
			$(P_\X P_\Y)^{\dagger}-P_\X$   & 	$(P_\X P_\Y)^{\dagger}-P_\Y$  \\ \hline			
		   $(P_\XP P_\YP)^{\dagger}-P_\YP$    & 	$(P_\XP P_\YP)^{\dagger}-P_\XP$      \\ \hline
					\end{tabular}
		\label{table:orth12} 
\end{table}
  \end{remark}
  
Our choice of the space $\H=\C^{n}$
may appear natural to the reader familiar with the matrix theory, 
but in fact is somewhat misleading. 
The principal angles (and the corresponding principal vectors) between 
the subspaces $\X\subset\H$ and $\Y\subset\H$ are exactly the same 
as those between the subspaces $\X\subset\X+\Y$ and $\Y\subset\X+\Y$, 
i.e.,\ we can reduce the space $\H$ to the space $\X+\Y\subset\H$
without changing PABS. 
This reduction changes the definition of the subspaces $\XP$ and $\YP$
and, thus, of the matrices $\QXP$ and $\QYP$
that column-span the subspaces $\XP$ and $\YP$. 
All our statements that use the subspaces $\XP$ and $\YP$
or the matrices $\QXP$ and $\QYP$ therefore have their new 
analogs, if the space $\X+\Y$ substitutes for~$\H$. 
The formulas $P_\XP=I-P_\X$ and $P_\YP=I-P_\Y$, 
look the same, but the identity operators are 
different in the spaces $\X+\Y$ and $\H$.

\def\refname{\centerline{\footnotesize\rm REFERENCES}}

\end{document}